\newtheorem{theorem}{Theorem}[section]
\newtheorem{thm}{Theorem}[section]
\newtheorem{lemma}[theorem]{Lemma}
\newtheorem{rem}[thm]{Remark}
\newtheorem{conj}[thm]{Conjecture}
\begin{document}

\title{The number of solutions to $y^2=px(Ax^2+2)$}
\date{\today}
\author{Tarek GARICI}
\address{Facult\'e de Math\'emath\'ematiques, U.S.T.H.B., LA3C \\
Bp 32 El Alia 16111 Bab Ezzouar Alger, Alg\'erie.}
\email{tgarici@usthb.dz}
\author{Omar KIHEL}
\address{Department of Mathematics,Brock University, Ontario, Canada L2S 3A1\\}
\email{okihel@brocku.ca}
\author{Jesse LARONE}
\address{Department of Mathematics,Brock University, Ontario, Canada L2S 3A1\\}
\email{jl08yo@brocku.ca}

\keywords{Diophantine equations, Legendre symbol, Integral points on elliptic curves.}

\begin{abstract}
In this paper, we find a bound for the number of the positive solutions to the titled equation, improving a result of Togb\'e. As a consequence, we prove a conjecture of Togb\'e in a few cases. 
\end{abstract}
\maketitle

\section{Introduction}
Cassels \cite{Ca:1985} was challenged to determine when the sum of three consecutive cubes equals a square.  He \cite{Ca:1985} reduced the problem to finding integral points on the elliptic curve $y^2=3x(x^2+2)$.  Using the arithmetic of certain quartic number fields, he obtained that the integral points on the above elliptic curve were $(x,y)=(0,0)$, $(1,3)$, $(2,6)$, and $(24,204)$.

Using the classical work of Ljunggren \cite{Lj:1954} and its generalizations (see \cite{Ak:2009}, \cite{ChVo:1997}, \cite{Yu:1997}, and \cite{YuLi:2009}), Luca and Walsh \cite{LuWa:2001} considered the problem of finding the number of positive integer solutions to the Diophantine equation $y^2=nx(x^2+2)$, where $n>1$ is a positive integer.  They proved that the number of positive integer solutions to $y^2=nx(x^2+2)$ is at most $3\cdot 2^{\omega (n)}-1$, where $\omega (n)$ is the number of distinct prime factors of $n$.  In \cite{Ch:2010}, Chen considered the case where $n$ is a prime number greater than $3$.  He proved, in particular, that the Diophantine equation $y^2=nx(x^2+2)$ has at most two positive integer solutions.

Recently, Togb\'e \cite{To:2014} considered the more general Diophantine equation
\begin{equation}\label{eqM}
y^2=px(Ax^2+2),
\end{equation}
where $p$ is a prime number and $A$ is an odd integer greater than $1$.  He proved the following theorem.
\begin{thm}\label{To1}
For any prime $p$ and any odd positive integer $A>1$, the Diophantine equation \eqref{eqM} has at most seven positive integer solutions $(x,y)$.
\end{thm}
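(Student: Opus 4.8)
The plan is to follow the descent method of Luca--Walsh \cite{LuWa:2001} and Chen \cite{Ch:2010}: strip the common factor from the two factors $x$ and $Ax^2+2$ of \eqref{eqM}, reduce to a short fixed list of quartic equations of Ljunggren type, and quote the known bounds for the number of their solutions.

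Since $A$ is odd we have $\gcd(x,Ax^2+2)=\gcd(x,2)\in\{1,2\}$, so I would split on the parity of $x$. If $x$ is odd, then $x$ and $Ax^2+2$ are coprime; writing each as a squarefree number times a square, $x=d_1X^2$ and $Ax^2+2=d_2Y^2$ with $d_1,d_2$ squarefree and $X,Y$ positive, \eqref{eqM} forces $pd_1d_2$ to be a perfect square, which for coprime squarefree $d_1,d_2$ happens only when $\{d_1,d_2\}=\{1,p\}$. If $x=2x'$ is even, then \eqref{eqM} becomes $y^2=4px'(2Ax'^2+1)$, hence $y=2y'$ and $y'^2=px'(2Ax'^2+1)$ with $x'$ and $2Ax'^2+1$ automatically coprime, and the same squarefree-part argument applies. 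Running through the four possibilities, every positive solution $(x,y)$ of \eqref{eqM} yields, with $x$ a fixed multiple of a square and $y$ determined by that data, a positive integer solution $(X,Y)$ of exactly one of
\begin{equation}\label{pp:quartics}
Y^2=Ap^2X^4+2,\qquad pY^2=AX^4+2,\qquad Y^2=2Ap^2X^4+1,\qquad pY^2=2AX^4+1,
\end{equation}
arising respectively from $x=pX^2$, $x=X^2$, $x=2pX^2$ and $x=2X^2$; moreover distinct solutions of \eqref{eqM} give distinct solutions of the relevant equation in \eqref{pp:quartics}. So it is enough to bound the number of positive solutions of each of these four equations.

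Each equation in \eqref{pp:quartics} is of the form $a_1Y^2-a_2X^4=c$ with $c\in\{1,2\}$ --- exactly the type of quartic handled by Ljunggren \cite{Lj:1954} and in the later refinements \cite{Ak:2009,ChVo:1997,Yu:1997,YuLi:2009} --- so by those results each of the four equations has at most two positive integer solutions, giving the a priori bound $4\cdot 2=8$. To reach $7$ one must save one solution, and here I would examine the two equations carrying $p$ on the left, $pY^2=AX^4+2$ and $pY^2=2AX^4+1$: a positive solution of either forces $A$, respectively $2A$, to be a fourth-power residue modulo $p$ and fixes the class of $Y$ modulo $p$, and matching this with the fundamental solution of the underlying Pell equation $pU^2-AV^2=2$, respectively $pU^2-2AV^2=1$, should show that at least one of these two equations has at most one positive solution. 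Summing the four bounds then gives at most $7$.

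The Ljunggren-type input being essentially off the shelf, the real obstacle is twofold. One must first check that those bounds really do apply uniformly for every prime $p$ and every odd $A>1$ --- in particular for non-squarefree $A$, and for $p=2$, where all but one of the equations in \eqref{pp:quartics} are vacuous. More importantly, one must run the congruence analysis carefully enough to rule out the possibility that the bound $8$ is attained; this last point is where the Legendre symbol enters, and it is the heart of the argument.
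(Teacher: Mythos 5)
Your reduction is exactly the one used in the paper: after splitting on the parity of $x$ and extracting squarefree parts of the two coprime factors, every solution of \eqref{eqM} lands injectively on one of the four quartics $v^2-Ap^2u^4=2$, $pv^2-Au^4=2$, $v^2-2Ap^2u^4=1$, $pv^2-2Au^4=1$, which are precisely equations \eqref{eq3}, \eqref{eq4}, \eqref{eq1} and \eqref{eq2} of the paper, and your handling of $p=2$ (only the analogue of \eqref{eq1} survives) is correct. The gap is in the endgame. You count ``at most two'' for each of the four equations, arrive at $8$, and then propose to save one solution by a fourth-power-residue argument tied to the fundamental solution of the associated Pell equation; but that step is only asserted (``should show that at least one of these two equations has at most one positive solution''), you yourself flag it as the heart of the argument, and as sketched it is not a proof --- indeed the residue condition you extract is not quite right (a solution of $pY^2=AX^4+2$ makes $-2A$ a square mod $p$, not $A$ a fourth power), and no mechanism is given for comparing two putative solutions. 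As written, the proposal therefore establishes only the bound $8$.

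The repair, however, is already in your toolbox and requires no congruence analysis: Ljunggren's theorem (Theorem \ref{th3}) bounds $aX^2-bY^4=1$ by \emph{one} positive solution whenever $a>1$, so the equation $pv^2-2Au^4=1$ contributes at most one solution rather than two, and the count closes at $2+2+2+1=7$. This is essentially how the paper proceeds, except that it goes further: it invokes Lemma \ref{lem7} to cut $v^2-2Ap^2u^4=1$ down to a single solution (since $2Ap^2=2d$ with $d=Ap^2$ odd), applies the precise structure of Theorem \ref{th2} to the two equations with right-hand side $2$, and only then runs the Legendre-symbol and congruence analysis you allude to --- not to reach $7$, but to obtain the case-by-case bounds of Theorem \ref{th1} (worst case six solutions), of which Theorem \ref{To1} is an immediate corollary.
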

Using results obtained through MAGMA, he then made the following conjecture on sharp bounds for the number of solutions to equation \eqref{eqM}.
\begin{conj}\label{To2}
Let $p$ be a prime and $A>1$ any odd positive integer.
\begin{enumerate}
\item If $(A,p)\equiv (1,1)$, $(1,5)$, $(1,7)$, $(3,1)$, $(3,3)$, $(3,7)$, $(5,1)$, $(5,5)$, $(5,7)$, $(7,3)$, or $(7,5) \pmod{8}$, then Diophantine equation \eqref{eqM} has at most one positive integer solution $(x,y)$.
\item If $(A,p)\equiv (1,3)$ or $(7,1)$, then Diophantine equation \eqref{eqM} has at most two positive integer solutions $(x,y)$.
\item If $(A,p)\equiv (3,5)$ or $(7,7)$, then Diophantine equation \eqref{eqM} has at most three positive integer solutions $(x,y)$.
\end{enumerate}
\end{conj}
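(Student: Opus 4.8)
The plan is to combine the classical reduction of \eqref{eqM} to Ljunggren-type quartic equations with a $2$-adic sieve, reinforced by Legendre-symbol conditions, which for many of the congruence classes $(A,p)\bmod 8$ kills all but one, two, or three of the resulting auxiliary equations; the conjecture then follows in those classes by bounding the few survivors.

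Since every class listed in Conjecture \ref{To2} has $p$ odd and $A>1$ odd, and $\gcd(x,Ax^2+2)$ divides $2$, I would first split a positive solution $(x,y)$ of \eqref{eqM} into four cases, according to the parity of $x$ and to whether $p$ divides $x$ or $Ax^2+2$ (the alternative that $p$ divides neither is impossible by the $p$-adic valuation of $y^2$; if $p\mid A$ only the first type of case survives). Writing, respectively, $x=pa^2$, $x=s^2$, $x=2pa^2$, $x=2s^2$ and absorbing the relevant power of $p$ into $y$, each case produces exactly one of
\begin{equation}\label{auxeqs}
b^2-Ap^2a^4=2,\qquad pt^2-As^4=2,\qquad b^2-2Ap^2a^4=1,\qquad pt^2-2As^4=1,
\end{equation}
and the map $x\mapsto$ (corresponding solution of \eqref{auxeqs}) is injective. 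This is essentially the mechanism behind Theorem \ref{To1}, each equation in \eqref{auxeqs} contributing boundedly many solutions by Ljunggren \cite{Lj:1954} and its refinements \cite{Ak:2009,ChVo:1997,Yu:1997,YuLi:2009}.

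The new ingredient is to sieve \eqref{auxeqs} $2$-adically. In the first, second, and fourth cases the variables $s,t$ (and $a,b$, where present) turn out to be odd, while in the third one checks that $a$ must be even; using that an odd square is $\equiv 1\pmod 8$ and an odd fourth power is $\equiv 1\pmod{16}$, reducing each equation modulo $8$ — and, when that is not decisive, modulo $16$ or $32$ — produces for each a short list of classes $(A,p)\bmod 8$ outside of which it has no solution. For example $pt^2-As^4=2$ with $s,t$ odd forces $p\equiv A+2\pmod 8$, and $b^2-Ap^2a^4=2$ with $a,b$ odd forces $A\equiv 7\pmod 8$; the remaining two equations are handled the same way. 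Sharper constraints come from quadratic reciprocity — the Legendre symbol $\left(\frac{-2A}{p}\right)$ governs when $p\mid Ax^2+2$ is possible, and $\left(\frac{2}{q}\right)$ constrains the odd prime divisors of expressions such as $b^2-2$. Checking the classes of the conjecture one by one, one finds that at most one, two, or three of the four equations in \eqref{auxeqs} survive the sieve in the relevant cases.

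For the surviving equations I would invoke the sharp bounds on equations $X^2-DY^4=\pm1,\pm2$ to count their positive solutions and sum over the survivors, reaching the bound $1$, $2$, or $3$ in the classes where the sieve is efficient enough. I expect two difficulties. The first is bookkeeping: one must check that the four-way reduction is exhaustive, that it does not overcount $x$, and — crucially for the sharper bounds — that the Pell-type quartics $b^2-2Ap^2a^4=1$ and $b^2-Ap^2a^4=2$, whose coefficients carry the factor $p^2$, really have at most one positive solution in the classes at hand rather than the two that the general theory permits. The second is that the sieve is not decisive everywhere: in a few classes (for instance $p\equiv1\pmod8$, where $b^2-2Ap^2a^4=1$ has no congruence obstruction, and the class $(A,p)\equiv(5,3)\pmod8$ conspicuously absent from the conjecture) two resistant equations survive together and one cannot preclude both being solvable, so the conjecture stays open there. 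The cases we obtain should be exactly those in which the surviving auxiliary equations are few enough and rigid enough for the Ljunggren-type bounds to close the gap.
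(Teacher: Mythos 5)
Your proposal is correct and follows essentially the same route as the paper: the same four-way reduction of \eqref{eqM} (according to the parity of $x$ and whether $p\mid x$ or $p\mid Ax^2+2$) to the quartic equations \eqref{eq1}--\eqref{eq4}, the same mod~$8$ and Legendre-symbol sieve, and the same appeal to the Ljunggren-type bounds (Theorems \ref{th2}--\ref{th4} and Lemma \ref{lem7}) for the surviving equations. You also correctly identify the same limitation the paper runs into — the equations with a residual bound of two solutions (notably \eqref{eq3}) prevent the method from reaching the conjectured bounds in every class — so, like the paper, this establishes Conjecture \ref{To2} only in the residue classes recorded in Theorem \ref{th1} and the final remark.
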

 
The aim of this paper is to improve the bound on the number of solutions to the Diophantine equation \eqref{eqM} provided in Theorem \ref{To1}, and to prove Conjecture \ref{To2} in some cases.  The main result of this paper is the following theorem.
\begin{thm}\label{th1}
Let $p$ be a prime and let $A>1$ be an odd integer.
\begin{enumerate}
\item If $p=2$, then Diophantine equation \eqref{eqM} has at most one positive integer solution $(x,y)$.
\item Suppose that $ p\mid A$ or $\left( \frac{-2A}{p}\right)= -1$, where $p$ is odd.
\begin{enumerate}
\item If $(A,p)\equiv (7,1)$ or $(7,7) \pmod{8}$, then Diophantine equation \eqref{eqM} has at most three positive integer solutions $(x,y)$.
\item Diophantine equation \eqref{eqM} has at most one positive integer solution $(x,y)$ otherwise.
\end{enumerate}

\item Suppose that $\left( \frac{-2A}{p}\right)=1$, where $p$ is odd. 
\begin{enumerate}
\item If $(A,p)\equiv (1,5)$, $(1,7)$, $(3,3)$, $(5,5)$, $(7,3)$, or $(7,5) \pmod{8}$, then Diophantine equation \eqref{eqM} has at most one positive integer  solution $(x,y)$.
\item If $(A,p)\equiv (1,1)$, $(3,1)$, $(3,7)$, $(5,1)$, $(5,3)$, or $(5,7) \pmod{8}$, then Diophantine equation \eqref{eqM} has at most two positive integer solutions $(x,y)$.
\item If $(A,p)\equiv (1,3)$ or $(3,5) \pmod{8}$, then Diophantine equation \eqref{eqM} has at most three positive integer solutions $(x,y)$.
\item If $(A,p)\equiv (7,7) \pmod{8}$, then Diophantine equation \eqref{eqM} has at most four positive integer solutions $(x,y)$.
\item If $(A,p)\equiv (7,1) \pmod{8}$, then Diophantine equation \eqref{eqM} has at most six positive integer solutions $(x,y)$.
\end{enumerate}
\end{enumerate}
\end{thm}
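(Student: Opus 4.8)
The plan is to reduce a positive solution of \eqref{eqM} to an integer point on one of a short list of generalized Pell quartics, to bound the number of such points by the theorem of Ljunggren \cite{Lj:1954} and the refinements \cite{Ak:2009,ChVo:1997,Yu:1997,YuLi:2009}, and then to delete whole families of quartics via congruence obstructions modulo $p$ and modulo $8$, summing the surviving bounds class by class. The case $p=2$ is disposed of first and directly: $y$ is even, $y=2y_1$, so $2y_1^2=x(Ax^2+2)$ forces $x=2x_1$ and $y_1^2=2x_1(2Ax_1^2+1)$ with $\gcd(x_1,2Ax_1^2+1)=1$, and a $2$-adic and coprimality descent leaves a single equation $r^2-Dw^4=\pm1$ whose positive solutions of the shape imposed by the descent number at most one, giving part~(1).

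Now let $p$ be odd and $(x,y)$ a positive solution. Since $\gcd(x,Ax^2+2)\mid2$, distinguish the parity of $x$; when $x=2X$ is even, $y=2y'$ and $y'^2=pX(2AX^2+1)$ with $\gcd(X,2AX^2+1)=1$. Comparing $p$-adic valuations (and using $\gcd(x,Ax^2+2)\mid2$) shows that exactly one of $p\mid x$, $p\mid Ax^2+2$ holds in the odd case, and exactly one of $p\mid X$, $p\mid 2AX^2+1$ in the even case; in each of the four subcases the two relevant cofactors are coprime, hence each is, after collecting powers of $p$, a perfect square, which forces $x$ to be one of $pt^2$, $u^2$, $2pt^2$, $2u^2$ and \eqref{eqM} to become one of
\[ r^2-Ap^2t^4=2,\qquad ps^2-Au^4=2, \]
\[ r^2-2Ap^2t^4=1,\qquad ps^2-2Au^4=1, \]
with $y$ equal, in reading order, to $ptr$, $pus$, $2ptr$, $2pus$; on each family $x$ is strictly increasing in $t$ or $u$, so the induced map on positive solutions of \eqref{eqM} is injective. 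Reducing the two right-hand equations modulo $p$ shows $-2A$ must be a nonzero square mod $p$, which fails exactly when $p\mid A$ or $\left(\frac{-2A}{p}\right)=-1$; so under the hypothesis of part~(2) only the two left-hand equations survive, while in part~(3) all four may occur.

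It remains to read off the congruence constraints and apply the quartic bounds. Reducing $r^2-Ap^2t^4=2$ modulo $16$ forces $t$ odd (a square is never $\equiv2\pmod{16}$), then modulo $8$ forces $A\equiv7\pmod8$, and modulo $p$ forces $\left(\frac2p\right)=1$, i.e.\ $p\equiv\pm1\pmod8$; the equation $r^2-2Ap^2t^4=1$ forces $X$, hence $t$, even; and $ps^2-Au^4=2$, $ps^2-2Au^4=1$ each impose a linear congruence between $p$ and $A$ modulo $8$ (with a further dichotomy on the parity of $u$ in the last). For every surviving equation, Ljunggren's theorem together with \cite{Ak:2009,ChVo:1997,Yu:1997,YuLi:2009} — which bound the number of positive integer solutions of $Z^2-DW^4=1$, of $Z^2-DW^4=2$, and of their $p$-twisted analogues, with $D$ here nonsquare (e.g.\ $A\equiv7\pmod8$ makes $Ap^2$ nonsquare, and $2Ap^2$ is never a square) — contributes at most one or two solutions. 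Adding these per-family bounds over the residue class of $(A,p)$ modulo $8$, after discarding the families killed above, gives the numbers in the statement: ``$A\equiv7$ and $p\equiv\pm1\pmod8$'' is exactly what selects the three-solution cases of~(2)(a), and the finer list in~(3) records which of the four families survive once $\left(\frac{-2A}{p}\right)=1$.

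The main obstacle is this final accounting. One must verify that the descent is exhaustive and injective on each family (nothing double-counted or missed, and the four families have disjoint $x$-ranges), that each Ljunggren-type result is invoked within its exact hypotheses (they exclude $D$ square and a finite list of $D$, all of which must be checked here), and — most delicately — that the modulo~$8$ and modulo~$p$ analysis is pushed through every residue class of $(A,p)$ so that the per-family counts sum to precisely the stated bounds rather than to anything larger. By comparison, the $p=2$ case and the disjointness of the $x$-ranges are routine.
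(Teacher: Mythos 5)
Your decomposition is the same as the paper's: the four quartic families $v^2-2Ap^2u^4=1$, $pv^2-2Au^4=1$, $v^2-Ap^2u^4=2$, $pv^2-Au^4=2$ (the paper's \eqref{eq1}--\eqref{eq4}), sieved by congruences modulo $8$ and modulo $p$. But the ``final accounting'' you defer is where the content lies, and the per-family bounds you actually invoke (``one or two solutions'' from the Ljunggren-type literature) do not sum to the stated numbers. First gap: the family $v^2-2Ap^2u^4=1$, and likewise $v^2-8Au^4=1$ in the case $p=2$, has leading coefficient $1$, so Ljunggren's theorem on $aX^2-bY^4=1$ (which requires $a>1$) does not apply, and the general bound for $X^2-DY^4=1$ is \emph{two} solutions (e.g.\ $D=1785$). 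Every one-solution case of the theorem, including all of part (1), needs this family to contribute at most one. The paper gets this from Lemma \ref{lem7}: for $D=2d$ even with $d\neq 8\cdot 1785$ (automatic here, since $\nu_2(2Ap^2)=1$ and $\nu_2(8A)=3$ while $\nu_2(16\cdot 1785)=4$), Theorem \ref{th4} forces $Y_2^2=2T_1Y_1^2$ with $T_1$ odd, a $2$-adic contradiction. Your proposal never isolates this point.

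Second, and more seriously: the family $pv^2-Au^4=2$ may a priori have two solutions by Theorem \ref{th2}, and your modulo-$8$ sieve only restricts it to the classes $(A,p)\equiv(1,3),(3,5),(5,7),(7,1)\pmod 8$. If it contributed two solutions in the classes $(1,3)$ and $(5,7)$, the totals would be $4$ and $3$ respectively, exceeding the claimed bounds of $3$ and $2$. The paper closes this with an argument you do not have: if two solutions exist, then both $b_1$ and $b_3$ attached to the minimal solution of $pX^2-AY^2=2$ are squares, and the identity $2b_3=3a_1^2pb_1+b_1^3A$ reduced modulo $p$ gives $\left(\frac{2}{p}\right)=\left(\frac{A}{p}\right)$; combined with $-Au^4\equiv 2\pmod p$ this forces $\left(\frac{-1}{p}\right)=1$, hence $p\equiv 1\pmod 4$, confining the two-solution case to $(3,5)$ and $(7,1)$. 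Without this step, or some substitute for it, the proof does not reach the stated bounds.
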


We will also prove the following result.
\begin{thm}\label{th5}
Let $p$ be a prime and let $A>1$ be an even integer.

\begin{enumerate}
\item If $p=2$, then Diophantine equation \eqref{eqM} has at most two positive integer solutions $(x,y)$. Moreover, if $A\equiv 0\pmod{4}$ and $ A\neq 2^{6}\cdot 1785$, then Diophantine equation \eqref{eqM} has at most one positive integer solution $(x,y)$.

\item Suppose that $p\mid A $ or $\left( \frac{-2A}{p}\right) =-1$, where $p$ is odd.

\begin{enumerate}
\item If $A\equiv 0\pmod{4}$, then Diophantine equation \eqref{eqM} has at most one positive integer solution $(x,y)$.

\item If $A\equiv 2\pmod{4}$, then Diophantine equation \eqref{eqM} has at most two positive integer solutions $(x,y)$.
\end{enumerate}

\item Suppose that $\left( \frac{-2A}{p}\right) =1$, where $p$ is odd. 

\begin{enumerate}
\item If $(A,p)\equiv (0,3)\pmod{4}$, then Diophantine equation \eqref{eqM} has at most one positive integer solution $(x,y)$.

\item If $(A,p)\equiv (0,1)\pmod{4}$, then Diophantine equation \eqref{eqM} has at most two positive integer solutions $(x,y)$.

\item If $(A,p)\equiv (2,3)\pmod{4}$, then Diophantine equation \eqref{eqM} has at most three positive integer solutions $(x,y)$.

\item If $(A,p)\equiv (2,1)\pmod{4}$, then Diophantine equation \eqref{eqM} has at most four positive integer solutions $(x,y)$.
\end{enumerate}
\end{enumerate}
\end{thm}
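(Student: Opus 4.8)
The plan is to run, for even $A$, the same descent that underlies Theorems~\ref{To1} and~\ref{th1}; the only genuinely new ingredient is that $Ax^2+2$ is now itself even, so the $2$-adic bookkeeping is reorganised and, when $A\equiv 0\pmod 4$, simplified. Fix a positive solution $(x,y)$ of \eqref{eqM} and write $A=2^am$ with $m$ odd and $a\geq 1$, so that $Ax^2+2=2\left(2^{a-1}mx^2+1\right)$. A common divisor of $x$ and $Ax^2+2$ divides $2$; since $Ax^2+2$ is even, $\gcd(x,Ax^2+2)$ equals $1$ when $x$ is odd and $2$ when $x$ is even. In either case the two factors of $y^2=px(Ax^2+2)$ are coprime apart from this controlled power of $2$, and comparison with the square on the left, together with the constraint imposed at $p$, forces
$$x=\delta_1 u^2,\qquad Ax^2+2=\delta_2 v^2$$
for positive integers $u,v$ and squarefree $\delta_1,\delta_2$ whose product is the squarefree part of $2p$ (or of $p$, once the common factor $2$ has been absorbed). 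Two observations do most of the work: if $p\mid A$ or $\left(\frac{-2A}{p}\right)=-1$ then $Ax^2+2\equiv 0\pmod p$ is impossible, so $p$ must divide $\delta_1$ and not $\delta_2$, which is what makes parts (2a) and (2b) so restrictive; and when $A\equiv 0\pmod 4$ the integer $2^{a-1}mx^2+1$ is odd for every $x$, so the admissible set of pairs $(\delta_1,\delta_2)$ ceases to depend on the parity of $x$, which roughly halves it and accounts for the drop in the bounds when $A\equiv 0\pmod 4$.

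For each admissible pair, substituting $x=\delta_1u^2$ into $Ax^2+2=\delta_2v^2$ and eliminating $x$ gives $\delta_2v^2-A\delta_1^2u^4=2$; because $A\delta_1^2$ is even one may extract a factor $2$ and arrive at a Ljunggren-type equation $bV^2-aU^4=1$ with $a,b$ positive integers depending only on $A$, $p$ and $(\delta_1,\delta_2)$. By Ljunggren's theorem \cite{Lj:1954} and the refinements of \cite{Ak:2009}, \cite{ChVo:1997}, \cite{Yu:1997}, and \cite{YuLi:2009}, each such equation has at most two solutions in positive integers, and at most one outside an explicit finite list of coefficient pairs. Summing these per-equation counts over the admissible pairs $(\delta_1,\delta_2)$, after discarding those forbidden by the local conditions at $2$ and at $p$, yields the bound stated for each congruence class $(A\bmod 4,\,p\bmod 4)$ and each value of $\left(\frac{-2A}{p}\right)$.

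The prime $p=2$ should be treated first and is cleaner: there $y$ is forced even, $y=2y'$, so $y'^2=x\left(\frac{A}{2}x^2+1\right)$ with $x$ and $\frac{A}{2}x^2+1$ genuinely coprime; hence $x=u^2$ is a perfect square and one is left with the single equation $v^2-\frac{A}{2}u^4=1$. This has at most two solutions, and at most one when $A/2$ is even — the one obstruction being the value $A/2=2\cdot 13^4-2$, i.e. $A=2^6\cdot 1785$, for which $u=1$ already produces $v=239$ via the classical identity $239^2=2\cdot 13^4-1$, and the resulting quartic is precisely a sporadic instance where the available bounds do not go below two; this is the exceptional $A$ recorded in part~(1).

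The main obstacle is the combinatorics of the admissible pairs $(\delta_1,\delta_2)$: for each residue pair $(A\bmod 4,\,p\bmod 4)$ and each sign of $\left(\frac{-2A}{p}\right)$ one must decide exactly which pairs survive the $2$-adic and $p$-adic obstructions, attach to each the correct sharp count for its quartic $bV^2-aU^4=1$, and — in order to isolate the single exceptional $A$ in part~(1) — keep track of the finitely many sporadic solutions of these quartics. None of the individual steps is deep beyond the cited Ljunggren-type theorems; the work is in organising the case analysis so that the totals come out as claimed.
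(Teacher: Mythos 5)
Your proposal follows essentially the same route as the paper: after handling $p=2$ via $w^2=x\bigl(\tfrac{A}{2}x^2+1\bigr)$ and the single quartic $v^2-\tfrac{A}{2}u^4=1$, you split the odd-$p$ case into the same four coprime factorizations (the paper's equations for $x\in\{2pu^2,2u^2,u^2,pu^2\}$), apply Ljunggren-type one-solution bounds to each, and prune by exactly the two local observations the paper uses (that $p\mid A$ or $\left(\frac{-2A}{p}\right)=-1$ forces $p\mid x$, and that $A\equiv 0\pmod 4$ kills the cases where $\tfrac{A}{2}x^2+1$ would be even). The only part you leave implicit is the residue computation showing the $pv^2-2Au^4=1$ branch requires $p\equiv 1\pmod 4$, which is what produces the dependence on $p\bmod 4$ in part (3); this is routine and the tallies then come out as in the paper.
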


\section{Preliminary results}
We present the results required to prove Theorem \ref{th1} and Theorem \ref{th5}.  Recall that if $q$ is a prime number, $\nu_q (m)$ denotes the $q$-adic valuation of $m$.

Let $a$ and $b$ be odd positive integers for which the equation $aX^2-bY^2=2$ has a solution in positive integers $(X,Y)$.  Let $(a_1,b_1)$ be the minimal positive solution to this equation and define
\begin{equation*}
\alpha=\frac{a_1\sqrt{a}+b_1\sqrt{b}}{\sqrt{2}}.
\end{equation*}
For an odd integer $k$, define $a_k$ and $b_k$ by
\begin{equation*}
\alpha^k=\frac{a_k\sqrt{a}+b_k\sqrt{b}}{\sqrt{2}}.
\end{equation*}
Luca and Walsh proved the following result in \cite{LuWa:2001} regarding the solutions to the equation
\begin{equation}\label{eq0}
aX^2-bY^4=2.
\end{equation}
\begin{thm}\label{th2}
\text{ }
\begin{enumerate}
\item If $b_1$ is not a square, then equation \eqref{eq0} has no solution.
\item If $b_1$ is a square and $b_3$ is not a square, then $(X,Y)=(a_1,\sqrt{b_1})$ is the only solution to equation \eqref{eq0}.
\item If $b_1$ and $b_3$ are both squares, then $(X,Y)=(a_1,\sqrt{b_1})$ and $(a_3,\sqrt{b_3})$ are the only solutions to equation \eqref{eq0}.
\end{enumerate}
\end{thm}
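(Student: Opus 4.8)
The plan is to recast equation \eqref{eq0} inside the quadratic order $\mathbb{Z}[\sqrt{ab}]$ and reduce it to classical results on quartic Diophantine equations. Since $a_1^2a-b_1^2b=2$, writing $\bar\alpha=(a_1\sqrt a-b_1\sqrt b)/\sqrt2$ one has $\alpha\bar\alpha=1$, so $\epsilon:=\alpha^2=(b_1^2b+1)+a_1b_1\sqrt{ab}$ is a unit of norm $1$ in $\mathbb{Z}[\sqrt{ab}]$; the standard descent argument on the norm-form equation $aX^2-bY^2=2$, together with the minimality of $(a_1,b_1)$, shows that every positive integer solution of that equation is $(a_k,b_k)$ for a unique odd $k\ge1$, where $a_k\sqrt a+b_k\sqrt b=\sqrt2\,\alpha^k$. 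Hence a positive solution of \eqref{eq0} is precisely an odd $k$ for which $b_k$ is a perfect square, and the theorem splits into two claims: (i) $b_k$ is a square only for $k\in\{1,3\}$; and (ii) $(a_1,\sqrt{b_1})$, respectively $(a_3,\sqrt{b_3})$, is a genuine solution of \eqref{eq0} exactly when $b_1$, respectively $b_3$, is a square, which is immediate.

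The engine for (i) is a multiplicativity identity. Put $t_k:=a_k\sqrt{2a}=\alpha^k+\bar\alpha^k$. Using $\alpha\bar\alpha=1$ one computes, for every odd prime $\ell$,
\[
\frac{b_{\ell k}}{b_k}=\frac{\alpha^{\ell k}-\bar\alpha^{\ell k}}{\alpha^{k}-\bar\alpha^{k}}=R_\ell(2aa_k^2),\qquad R_3(X)=X-1,\quad R_5(X)=X^2-3X+1,
\]
where $R_\ell\in\mathbb{Z}[X]$ has degree $(\ell-1)/2$ and $R_\ell(4)=\ell$. From $aa_k^2-bb_k^2=2$ one checks that $a_k,b_k$ are odd, that $\gcd(a_k,b_k)=1$, and then that every common prime factor of $b_k$ and $R_\ell(2aa_k^2)$ divides $\ell$, occurring to at most the first power in $R_\ell(2aa_k^2)$. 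Consequently, if $b_{\ell k}$ is a square, splitting off the $\ell$-part forces $R_\ell(2aa_k^2)$ to equal $\ell^{\varepsilon}$ times a perfect square for some $\varepsilon\in\{0,1\}$. For $\ell=5$ this fails already modulo $8$: one has $2aa_k^2\equiv 2$ or $6\pmod 8$, hence $R_5(2aa_k^2)\equiv 3$ or $7\pmod 8$, which is a non-square mod $8$ even after dividing by $5$. For a general prime $\ell\ge5$ the same conclusion follows from Ljunggren's classical work \cite{Lj:1954} and its refinements \cite{Ak:2009,ChVo:1997,Yu:1997,YuLi:2009} on equations of the form $X^2-DY^4=\pm1$ (equivalently, from the theory of perfect squares in binary recurrences). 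Thus $b_k$ can be a square only when $k$ has no prime factor $\ge5$, i.e. when $k=3^{j}$.

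It remains to show that $b_{3^{j}}$ is a square only for $j\le1$. Iterating the case $\ell=3$ gives $b_{3^{j}}=b_1\prod_{i=0}^{j-1}(2aa_{3^{i}}^{2}-1)$, with all pairwise gcds dividing $3$. Setting $s=2aa_1^2$ and using that $b_1$ is a square together with $aa_1^2=bb_1^2+2$, a short induction on $j$ turns the condition ``$b_{3^{j}}$ is a square'' with $j\ge2$ into a superelliptic or quartic Diophantine equation of exactly the shape resolved in the works cited above, whose integer points are all degenerate and incompatible with $a_1^2a-b_1^2b=2$ and $b_1\ge1$. Feeding the same results into the residual low cases completes (i), and hence the theorem.

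The step I expect to be the main obstacle is the treatment of the prime $3$. Because the coprimality in the multiplicativity identity holds only up to a factor of $3$, one cannot simply assert that a product of near-coprime integers is a square if and only if each factor is; instead one must track $\nu_3(b_k)$ up the tower $k=3^{j}$, distinguishing whether $3\mid b_1$, $3\mid(2aa_1^2-1)$, or neither, and in each branch reducing carefully to one of the cited quartic or superelliptic statements and verifying that the degenerate solutions they allow are ruled out here by positivity. Getting those theorems into precisely the normal forms the argument throws up, and keeping the $3$-adic case analysis finite and exhaustive, is where the real work lies.
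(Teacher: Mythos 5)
This is Theorem 2.1 of the paper, which is not proved there at all: it is imported verbatim from Luca and Walsh \cite{LuWa:2001}, so your attempt must be measured against that source. Your skeleton is the right one and matches theirs: reduce solutions of $aX^2-bY^4=2$ to the odd indices $k$ for which $b_k$ is a perfect square, via the standard fact that all positive solutions of $aX^2-bY^2=2$ are the $(a_k,b_k)$ with $k$ odd, and then use the multiplicativity $b_{\ell k}=b_k\,R_\ell\bigl(2aa_k^2\bigr)$ with the near-coprimality of the two factors. Your computations for $\ell=3$ and $\ell=5$ are correct (indeed $2aa_k^2\equiv 2$ or $6\pmod 8$ gives $R_5\equiv 7$ or $3\pmod 8$, which kills both $R_5=\square$ and $R_5=5\,\square$ since $R_5$ is odd).

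The genuine gap is that the heart of the theorem --- $b_k$ is a square only for $k\in\{1,3\}$ --- is never actually established. For a prime $\ell\ge 7$ the polynomial $R_\ell$ has degree $(\ell-1)/2\ge 3$, so the condition $R_\ell\bigl(2aa_k^2\bigr)=\ell^{\varepsilon}\square$ is \emph{not} an equation of the form $X^2-DY^4=\pm1$, and none of \cite{Lj:1954}, \cite{Ak:2009}, \cite{ChVo:1997}, \cite{Yu:1997} applies to it; there is also no congruence obstruction in general. Likewise the tower $k=3^{j}$, $j\ge 2$, is dispatched with ``a short induction \dots{} of exactly the shape resolved in the works cited above,'' but no such reduction is exhibited, and the required normal form is not one of the cited ones. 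What actually closes both gaps in \cite{LuWa:2001} is the full classification of perfect squares in Lehmer sequences (the sequence $b_k/b_1=(\alpha^k-\bar\alpha^k)/(\alpha-\bar\alpha)$ is a Lehmer sequence for the pair $(\alpha,\bar\alpha)$ with $\alpha\bar\alpha=1$), building on Cohn and Ribenboim--McDaniel; that classification is the main, and hard, theorem of the very paper being quoted. As written, your argument implicitly assumes that result rather than proving it, so the proof is incomplete at precisely the step you yourself flag as the main obstacle.
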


Ljunggren proved the following result in \cite{Lj:1954}.
\begin{thm}\label{th3}
Let $a>1$ and $b$ be two positive integers.  The equation
\begin{equation*}
aX^2-bY^4=1
\end{equation*}
has at most one solution in positive integers $(X,Y)$.
\end{thm}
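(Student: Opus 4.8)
I would deduce Theorem \ref{th3} from the structure of the solution set of an associated Pell equation, reducing it to the statement that a certain sequence of Chebyshev-type polynomial values contains no perfect square beyond its first term; this is, in outline, Ljunggren's strategy. Put $Z=Y^2$; then every positive solution $(X,Y)$ of $aX^2-bY^4=1$ gives a positive solution $(X,Z)$ of the Pell-type equation $aX^2-bZ^2=1$ in which $Z$ is a perfect square, and conversely. If this Pell equation has no solution, the quartic has none. Otherwise let $(u_1,v_1)$ be its least positive solution and put $\varepsilon=u_1\sqrt a+v_1\sqrt b$, so $\varepsilon(-u_1\sqrt a+v_1\sqrt b)=-(au_1^2-bv_1^2)=-1$. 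The hypothesis $a>1$ guarantees that the positive solutions are indexed by the odd integers (for $a=1$ one would need all powers of $\varepsilon$, which changes the count): a standard argument identifies them with the pairs $(u_k,v_k)$, $k$ odd, defined by $u_k\sqrt a+v_k\sqrt b=\varepsilon^k$, and the usual Pell identities give $2v_k\sqrt b=\varepsilon^k-\varepsilon^{-k}$. Since $(v_k)$ is strictly increasing, Theorem \ref{th3} is equivalent to saying that at most one of the $v_k$ is a perfect square.

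Next I would expose the multiplicative structure of $(v_k)$. Dividing $2v_k\sqrt b=\varepsilon^k-\varepsilon^{-k}$ by $2v_1\sqrt b=\varepsilon-\varepsilon^{-1}$ gives $v_k=v_1\,g_{1,k}$, where $g_{1,k}=(\varepsilon^k-\varepsilon^{-k})/(\varepsilon-\varepsilon^{-1})=U_{k-1}(u_1\sqrt a)$ is a positive integer, $U_{k-1}$ being the Chebyshev polynomial of the second kind (for $k$ odd, a polynomial in $au_1^2$ of degree $(k-1)/2$). More generally, if $k=\ell_1\ell_2\cdots\ell_r$ is a factorisation of the odd integer $k$ into primes and $d_t=\ell_1\cdots\ell_t$, then telescoping the relations $v_{d_t}=v_{d_{t-1}}\,g_{d_{t-1},d_t}$ gives $v_k=v_1\prod_{t=1}^{r}g_{d_{t-1},d_t}$, each block being $g_{d,d\ell}=(\varepsilon^{d\ell}-\varepsilon^{-d\ell})/(\varepsilon^{d}-\varepsilon^{-d})=U_{\ell-1}(u_d\sqrt a)$, an integer polynomial of degree $(\ell-1)/2$ in $au_d^2$. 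Putting $c_j=\varepsilon^{2j}+\varepsilon^{-2j}$, the recurrence $c_0=2$, $c_1=2+4bv_1^2$, $c_{j+1}=c_1c_j-c_{j-1}$ gives $c_j\equiv 2\pmod{4bv_1^2}$ by induction, whence $g_{d,d\ell}=1+\sum_{i=1}^{(\ell-1)/2}c_{di}\equiv\ell\pmod{4bv_1^2}$; combined with the strong divisibility property $\gcd(v_m,v_n)=v_{\gcd(m,n)}$ for odd $m,n$ (itself a consequence of the addition formulas), this shows that the $r+1$ factors $v_1,g_{d_0,d_1},\dots,g_{d_{r-1},d_r}$ are pairwise coprime up to a factor dividing a fixed power of $2bv_1$.

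Granting this, suppose some $v_k$ with $k>1$ odd is a perfect square. Then in $v_k=v_1\prod_{t=1}^{r}g_{d_{t-1},d_t}$ we have $r\geq 1$, and distributing square factors through the nearly coprime factorisation forces at least one block $g_{d,d\ell}$ (with $\ell$ an odd prime dividing $k$) to equal a perfect square times a bounded squarefree factor. Ruling this out is the crux: $g_{d,d\ell}=U_{\ell-1}(u_d\sqrt a)$ is the value at $au_d^2$ of an explicit integer polynomial of degree $(\ell-1)/2$, and one must show it is not (essentially) a perfect square for any odd prime $\ell$. The flavour is already visible for $\ell=3$, where $g_{d,3d}=4au_d^2-1\equiv 3\pmod 4$ cannot be a square. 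For larger $\ell$ I would argue by a delicate congruence analysis: choose auxiliary moduli adapted to the residue of $\ell$ modulo small numbers, and use quadratic reciprocity to produce, in each case, a prime modulo which the polynomial value is a quadratic non-residue. This case-by-case bookkeeping --- together with the careful control of the bounded common factors --- is precisely the technical heart of Ljunggren's argument and is where essentially all the difficulty lies; a less elementary alternative is to bound $\ell$, hence $k$, by an effective lower bound for linear forms in two logarithms and then settle the finitely many remaining cases computationally. Once no $v_k$ with $k>1$ can be a perfect square, the only possible square in $(v_k)$ is $v_1$, and therefore $aX^2-bY^4=1$ has at most one positive solution.
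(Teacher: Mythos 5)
The paper does not actually prove this statement: Theorem \ref{th3} is imported from Ljunggren \cite{Lj:1954} as a black box, so there is no in-paper argument to compare yours against. Judged on its own terms, your submission is a reasonable road map but not a proof, because the decisive step is left open. The setup is sound: passing to $Z=Y^2$, indexing the solutions of $aX^2-bZ^2=1$ by odd powers of $\varepsilon$ (this is where $a>1$ enters), writing $v_k=v_1\prod_t g_{d_{t-1},d_t}$ with $g_{d,d\ell}=U_{\ell-1}(u_d\sqrt a)\equiv \ell \pmod{4bv_1^2}$, and reducing the theorem to the claim that essentially no block $g_{d,d\ell}$ is a square. But at what you yourself call the crux, you handle only $\ell=3$ (via $4au_d^2-1\equiv 3\pmod 4$); for every larger odd prime $\ell$ you offer only the hope that ``a delicate congruence analysis'' and quadratic reciprocity will produce a modulus at which the value is a non-residue. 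No reason is given why such a modulus must exist, and none is available by soft considerations: values of exactly these Chebyshev-type expressions genuinely can be squares (the paper's Theorem \ref{th4} records that $X^2-1785Y^4=1$ has a second solution arising from $U_4$), so any obstruction must exploit the specific arithmetic of $u_d$ and $v_d$ in a way your sketch does not supply. That missing step is the content of Ljunggren's theorem, not a routine verification, and deferring it to ``linear forms in logarithms plus computation'' is likewise a plan rather than an argument.

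Two further gaps in the bookkeeping. The assertion that the factors $v_1,g_{d_0,d_1},\dots,g_{d_{r-1},d_r}$ are ``pairwise coprime up to a factor dividing a fixed power of $2bv_1$'' is not proved, and the subsequent step in which square factors are distributed across this nearly coprime product needs the common factor pinned down exactly (not merely bounded) before any congruence on $g_{d,d\ell}\equiv\ell$ can be brought to bear. Finally, your endgame establishes the stronger claim that only $v_1$ can be a square, whereas the theorem asserts only that at most one $v_k$ is; you should either justify the stronger claim or rearrange the argument so that two distinct square values $v_{k_1}$, $v_{k_2}$ lead directly to a contradiction.
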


Let $D$ be a positive non-square integer, and let $\epsilon_D=T_1+U_1\sqrt{D}$ denote the minimal unit greater than 1, of norm 1, in $\mathbb{Z}[\sqrt{D}]$.  Define ${\epsilon_D}^k=T_k+U_k\sqrt{D}$ for $k \geq 1$.  Togb\'{e}, Voutier, and Walsh proved the following result in \cite{ToVoWa:2005}.
\begin{thm}\label{th4}
Let $D$ be a positive non-square integer.  There are at most two positive integer solutions $(X,Y)$ to the equation $X^2-DY^4=1$.  
\begin{enumerate}
\item If two solutions such that $Y_1<Y_2$ exist, then ${Y_1}^2=U_1$ and ${Y_2}^2=U_2$, except only if $D=1785$ or $D=16 \cdot 1785$, in which case ${Y_1}^2=U_1$ and ${Y_2}^2=U_4$.
\item If only one positive integer solution $(X,Y)$ to the equation $X^2-DY^4=1$ exists, then $Y^2=U_{\ell}$ where $U_1=\ell v^2$ for some square-free integer $\ell$, and either $\ell=1$, $\ell=2$, or $\ell=p$ for some prime $p \equiv 3 \pmod{4}$.
\end{enumerate}
\end{thm}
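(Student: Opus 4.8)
The plan is to convert the quartic equation into a question about perfect squares in the Pell sequence $(U_k)$, to localize the admissible indices by a $2$-adic descent, and to control each resulting branch with Ljunggren's theorem (Theorem \ref{th3}).

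First I would record the correspondence between solutions and indices. Every positive solution of $X^2 - DY^4 = 1$ is a positive solution of the Pell equation $X^2 - DZ^2 = 1$ in which $Z = Y^2$ happens to be a perfect square. Since all positive solutions of the latter are $T_k + U_k\sqrt{D} = \epsilon_D^k$ for $k \geq 1$, the problem reduces to determining the indices $k$ for which $U_k$ is a perfect square. Writing $U_1 = \ell v^2$ with $\ell$ squarefree, the index $k = 1$ contributes a solution exactly when $\ell = 1$, which already exhibits the base case of the asserted form.

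The heart of the argument is a descent on $\nu_2(k)$ built from the duplication identities $U_{2j} = 2 T_j U_j$ and $T_{2j} = T_j^2 + D U_j^2$, together with $\gcd(T_j, U_j) = 1$ and $T_j^2 - D U_j^2 = 1$. Suppose $U_{2j}$ is a perfect square. Since $T_j$ and $U_j$ are coprime they cannot both be even, while $2 T_j U_j$ cannot be twice an odd number; hence exactly one of them is even, and comparing $2$-adic valuations forces one of two branches: either $T_j = 2w^2$ and $U_j = s^2$, giving $4w^4 - D s^4 = 1$, or $T_j = r^2$ and $U_j = 2u^2$, giving $r^4 - 4D u^4 = 1$. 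The first branch is of the shape $aX^2 - bY^4 = 1$ with $a = 4$, $X = w^2$, $Y = s$, $b = D$, so Theorem \ref{th3} yields at most one solution there; the second is a quartic Fermat-type equation $r^4 - 4Du^4 = 1$ admitting no nontrivial solutions outside degenerate configurations. Feeding this back through $\gcd(U_m, U_n) = U_{\gcd(m,n)}$, I would show that square-indices are confined to at most two values, that the smaller equals the squarefree part $\ell$ of $U_1$, and, by testing which squarefree $\ell$ are compatible with $U_\ell$ being a square via congruences modulo powers of $2$ and quadratic reciprocity, that $\ell \in \{1, 2, p\}$ with $p \equiv 3 \pmod 4$.

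The remaining task is to separate the generic conclusion $Y_1^2 = U_1$, $Y_2^2 = U_2$ from the sporadic one $Y_2^2 = U_4$. The index $4$ can only carry a square through the precise coincidence making $T_1 T_2 U_1$ a perfect square in $U_4 = 4 T_1 T_2 U_1$, and I would isolate this configuration as a specific family of quartic Thue equations. I expect this to be the main obstacle: ruling out a third square-index in full generality, and proving that the $U_4$ coincidence occurs for no $D$ other than $1785$ and $16 \cdot 1785$, lies beyond the elementary descent. It requires an effective lower bound for linear forms in two logarithms to bound the relevant indices and solutions, followed by a continued-fraction or lattice-reduction computation to eliminate the finitely many survivors, exactly as in the explicit resolution of such associated quartic Thue families.
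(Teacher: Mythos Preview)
The paper does not prove this theorem at all: it is quoted verbatim as a preliminary result of Togb\'e, Voutier, and Walsh \cite{ToVoWa:2005} and then invoked as a black box (notably in the proof of Lemma~\ref{lem7} and in the case $p=2$, $A$ even of Theorem~\ref{th5}). There is therefore no proof in the present paper against which to compare your attempt.

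As for the substance of your sketch, the overall architecture---reduce to finding squares among the $U_k$, use the duplication formula $U_{2j}=2T_jU_j$ with $\gcd(T_j,U_j)=1$ to descend, and finish the sporadic cases with Baker-type bounds plus a search---is indeed the strategy of \cite{ToVoWa:2005}. But your outline is not yet a proof: the claim that ``square-indices are confined to at most two values'' does not follow from the single descent step you wrote down, the treatment of the branch $r^4-4Du^4=1$ as having ``no nontrivial solutions outside degenerate configurations'' is an assertion rather than an argument, and the restriction $\ell\in\{1,2,p\}$ with $p\equiv 3\pmod 4$ requires a more careful analysis of the Lehmer/Lucas divisibility structure than ``congruences modulo powers of $2$ and quadratic reciprocity'' suggests. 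You correctly identify the genuine obstacle: isolating $D=1785$ and $D=16\cdot 1785$ is the nontrivial computational core of \cite{ToVoWa:2005}, and cannot be done by the elementary descent alone.
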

We will make the theorem \ref{th1} more precise when $D$ is even.
\begin{lemma}\label{lem7}
Let $D$ be a positive non-square integer.  Suppose that $D=2d$ where $d$ is a positive integer different from $8 \cdot 1785$.  Then the equation $X^2-DY^4=1$ has at most one positive solution $(X,Y)$.
\end{lemma}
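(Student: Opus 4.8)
The plan is to start from Theorem \ref{th4} applied to $D=2d$ and rule out the possibility of two positive solutions. Suppose, for contradiction, that $X^2-DY^4=1$ has two positive integer solutions $(X_1,Y_1)$ and $(X_2,Y_2)$ with $Y_1<Y_2$. Since $D=2d\neq 1785$ and, by hypothesis, $d\neq 8\cdot 1785$ so that $D\neq 16\cdot 1785$, part (1) of Theorem \ref{th4} gives ${Y_1}^2=U_1$ and ${Y_2}^2=U_2$, where $\epsilon_D^k=T_k+U_k\sqrt D$. The first equality already tells us that $U_1$ is a perfect square. Now I would use the standard recurrences for the powers of the fundamental unit: $T_2=T_1^2+DU_1^2$ and, crucially, $U_2=2T_1U_1$. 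Hence $U_2={Y_2}^2$ forces $2T_1U_1$ to be a perfect square.

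The key arithmetic step is a parity/valuation argument on $2T_1U_1$. From the norm relation $T_1^2-DU_1^2=1$ with $D=2d$ even, we get $T_1^2\equiv 1\pmod 2$, so $T_1$ is odd; consequently $\gcd(T_1,2U_1)$ is a power of... more precisely, since $T_1$ is odd we have $\gcd(T_1,2U_1)=\gcd(T_1,U_1)$, and from $T_1^2-DU_1^2=1$ any common divisor of $T_1$ and $U_1$ divides $1$, so $\gcd(T_1,U_1)=1$. Therefore $T_1$ and $2U_1$ are coprime, and $2T_1U_1$ being a square forces both $T_1$ and $2U_1$ to be perfect squares individually. But $U_1={Y_1}^2$ is already a square, so $2U_1=2{Y_1}^2$ is a square only if $2$ is a square, which is impossible for $Y_1\geq 1$. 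This contradiction shows that at most one positive solution exists.

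I should double-check the degenerate edge of the recurrence: the identity $U_2=2T_1U_1$ holds provided $\epsilon_D$ is genuinely the fundamental unit of norm $1$, which is part of the setup of Theorem \ref{th4}, so no separate case analysis is needed there. The only real subtlety — and the step I would treat most carefully — is the coprimality claim $\gcd(T_1,2U_1)=1$; it hinges on $D$ being \emph{even}, which is exactly why the conclusion is stated for $D=2d$ and would fail for general $D$. The exclusion of $d=8\cdot 1785$ is needed solely to invoke the clean form ${Y_2}^2=U_2$ from Theorem \ref{th4}(1) rather than the exceptional ${Y_2}^2=U_4$; in that excluded case the same style of argument could in principle be pushed through by hand, but it is not needed for the lemma as stated. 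No other obstacles are anticipated; the argument is short once the recurrence and the coprimality are in place.
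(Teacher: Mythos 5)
Your proposal is correct and follows essentially the same route as the paper: both invoke Theorem \ref{th4} to get ${Y_1}^2=U_1$, ${Y_2}^2=U_2=2T_1U_1$, and both derive the contradiction from the fact that $T_1$ is odd because $T_1^2-2dU_1^2=1$. The only cosmetic difference is that the paper concludes via a parity mismatch of $2$-adic valuations in $ {Y_2}^2=2T_1{Y_1}^2$, whereas you factor $2T_1U_1$ into the coprime pieces $T_1$ and $2U_1=2{Y_1}^2$; both close the argument equally well.
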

\begin{proof}
Suppose that there exist two solutions to the equation $X^2-DY^4=1$.  Then there exist positive integer solutions $(X_1,Y_1)$ and $(X_2,Y_2)$ such that $Y_1<Y_2$.  It follows from Theorem \ref{th4} that ${Y_1}^2=U_1$, ${Y_2}^2=U_2$, and $U_2=2T_1U_1$, so ${Y_2}^2=2T_1{Y_1}^2$.  Then
\begin{equation}\label{eq2:2}
2\nu_2(Y_2)=1+\nu_2(T_1)+2\nu_2(Y_1).
\end{equation}
Since $\epsilon_D=T_1+U_1\sqrt{D}$ is a unit of norm $1$ in $\mathbb{Z}[\sqrt{D}]$ and $D=2d$, we obtain ${T_1}^2-2d{U_1}^2=1$, so that $T_1$ is odd.  Then $\nu_2(T_1)=0$, which is a contradiction with \eqref{eq2:2}. 
\end{proof}

\section{Main results}
{\it Proof of Theorem \ref{th1}.}  
Let $p=2$, and let $A$ be an odd positive integer.  Let $x,y$ be positive integers such that $y^2=2x(Ax^2+2)$. It is not difficult to see that $4$ divides $x$ and $y$.  Let $y=4w$ and $x=4z$.  Then we obtain
\begin{equation*}
w^2=z(8Az^2+1).
\end{equation*}
Since $\gcd(z,8Az^2+1)=1$, there exist positive integers $u$ and $v$ such that $z=u^2$, $8Az^2+1=v^2$, and
\begin{equation*}
v^2-8Au^4=1.
\end{equation*}
By Lemma \ref{lem7}, this equation has at most one positive integer solution $(u,v)$.

Let $p$ be an odd prime, and let $A$ be an odd positive integer.  Let $x,y$ be positive integers such that $y^2=px(Ax^2+2)$.  We remark that $\gcd(x,Ax^2+2)=1$ or $2$, so we consider two cases depending on the parity of $x$, with each case yielding two equations.  Suppose first that $x$ is even, so we let $x=2z$.  Since $p$ is prime, we let $y=2pw$.  Then we obtain
\begin{equation*}
pw^2=z(2Az^2+1).
\end{equation*}
Since $\gcd(z,2Az^2+1)=1$, there exist positive integers $u$ and $v$ such that either $z=pu^2$, $2Az^2+1=v^2$, and
\begin{equation}\label{eq1}
v^2-2Ap^2u^4=1,
\end{equation}
or $z=u^2$, $2Az^2+1=pv^2$, and
\begin{equation}\label{eq2}
pv^2-2Au^4=1.
\end{equation}
Suppose next that $x$ is odd.  Since $p$ is prime, we let $y=pw$.  Then we obtain
\begin{equation*}
pw^2=x(Ax^2+2).
\end{equation*}
Since $\gcd (x, Ax^2+2)=1$, there exist odd integers $u$ and $v$ such that either $x=pu^2$, $Ax^2+2=v^2$, and
\begin{equation}\label{eq3}
v^2-Ap^2u^4=2,
\end{equation}
or $x=u^2$, $Ax^2+2=pv^2$, and
\begin{equation}\label{eq4}
pv^2-Au^4=2.
\end{equation}
We consider each of the above four equations separately to determine the number of positive integer solutions to equation \eqref{eqM}.  

We begin with equation \eqref{eq1}.  Let $D=2Ap^2$.  By Lemma \ref{lem7}, equation \eqref{eq1} has at most one positive integer solution.

We next consider equation \eqref{eq2}, which has at most one positive integer solution by Theorem \ref{th3}.  It follows from this equation that $v$ is odd and that $u$ is even if and only if $p\equiv 1 \pmod{8}$.  If $p\equiv 3,5,$ or $7 \pmod{8}$, then $u$ is odd, and we obtain $p-2A\equiv 1 \pmod{8}$.  Then equation \eqref{eq2} has a solution only if $(A,p)\equiv (1,1)$, $(3,1)$, $(5,1)$, $(7,1)$, $(1,3)$, $(5,3)$, $(3,7)$, or $(7,7) \pmod{8}$.  Furthermore, equation \eqref{eq2} has a solution only if $\left( \frac{-2A}{p}\right) =1$.

Equation \eqref{eq3} has at most two positive integer solutions by Theorem \ref{th2}.  Since $u$ and $v$ are both odd, we have $1-A\equiv 2 \pmod{8}$ so $A\equiv 7 \pmod{8}$ and $v^2\equiv 2 \pmod{p}$ so $\left( \frac{2}{p} \right)=1$.  Then $p\equiv 1$ or $7 \pmod{8}$, and equation \eqref{eq3} has at least one solution only if $(A,p)\equiv (7,1)$ or $(7,7) \pmod{8}$.

Equation \eqref{eq4} has at most two positive integer solutions by Theorem \ref{th2}.  Since $u$ and $v$ are odd, we have $p-A\equiv 2 \pmod{8}$ so that equation \eqref{eq4} has a solution only if $(A,p)\equiv (1,3)$, $(3,5)$, $(5,7)$, or $(7,1) \pmod{8}$.  In particular, suppose that equation \eqref{eq4} has two solutions, and let $(a_1,b_1)$ be the minimal positive solution of $pX^2-AY^2=2$, so
\begin{equation*}
p{a_1}^2-A{b_1}^2=2.
\end{equation*}
Let
\begin{equation*}
\alpha=\frac{a_1\sqrt{p}+b_1\sqrt{A}}{\sqrt{2}},
\end{equation*}
and compute $\alpha^3$ to obtain
\begin{equation*}
b_3=\frac{3{a_1}^2pb_1+{b_1}^3A}{2}.
\end{equation*}
Since we assume that two solutions exist to equation \eqref{eq4}, $b_1$ and $b_3$ must both be squares by Theorem \ref{th2}.  It follows that there exist two positive integers $B_1$ and $B_3$ such that $b_1={B_1}^2$, $b_3={B_3}^2$, and
\begin{equation*}
3{a_1}^2p{B_1}^2+{B_1}^6A=2{B_3}^2.
\end{equation*}
This yields $\left( \frac{2}{p}\right)=\left( \frac{A}{p}\right)$.  Since $-Au^4\equiv 2 \pmod{p}$, we obtain $\left( \frac{-A}{p}\right)=\left( \frac{2}{p}\right)=\left( \frac{A}{p}\right)$ so $\left( \frac{-1}{p} \right)=1$.  It follows that $p\equiv 1 \pmod{4}$, so $p \equiv 1$ or $5 \pmod{8}$.  Therefore equation \eqref{eq4} has at most two positive integer solutions only if $(A,p)\equiv (3,5)$ or $(7,1) \pmod{8}$, and it has at most one positive integer solution only if $(A,p)\equiv (1,3)$ or $(5,7) \pmod{8}$.
Furthermore, equation \eqref{eq4} has a solution only if $\left( \frac{-2A}{p}\right) =1$.\\
Since the number of solutions to equations \eqref{eq2} and \eqref{eq4} depends on the value of $\left( \frac{-2A}{p}\right)$, we first suppose that $p\mid A$ or $\left( \frac{-2A}{p}\right)= -1$, then equations \eqref{eq2} and \eqref{eq4}  have no integer solution, equation \eqref{eq1} has at most one solution, and \eqref{eq3} has at most two positive integer solutions only if $(A,p)\equiv (7,1)$, or $(7,7) \pmod{8}$. Therefore when $p\mid A$ or $\left( \frac{-2A}{p}\right)=-1$, equation \eqref{eqM} has at most three positive integer solutions if $(A,p)\equiv (7,1)$, or $(7,7) \pmod{8}$, and it has at most one positive integer solution in all other cases.\\

We next suppose that $\left( \frac{-2A}{p}\right)=1$.  Then equation \eqref{eq2} has at most one positive integer solution.\\

\noindent If $A\equiv 1 \pmod{8}$, then equation \eqref{eq1} has at most one solution, \eqref{eq2} has at most one solution and only if $p \equiv 1$ or $3 \pmod{8}$, \eqref{eq3} has no solution, and \eqref{eq4} has at most one solution and only if $p \equiv 3 \pmod{8}$.\\

\noindent If $A\equiv 3 \pmod{8}$, then equation \eqref{eq1} has at most one solution, \eqref{eq2} has at most one solution and only if $p \equiv 1$ or $7 \pmod{8}$, \eqref{eq3} has no solution, and \eqref{eq4} has at most two solutions and only if $p \equiv 5 \pmod{8}$.\\

\noindent If $A\equiv 5 \pmod{8}$, then equation \eqref{eq1} has at most one solution, \eqref{eq2} has at most one solution and only if $p \equiv 1$ or $3 \pmod{8}$, \eqref{eq3} has no solution, and \eqref{eq4} has at most one solution and only if $p \equiv 7 \pmod{8}$.\\

\noindent If $A\equiv 7 \pmod{8}$, then equation \eqref{eq1} has at most one solution, \eqref{eq2} has at most one solution and only if $p \equiv 1$ or $7 \pmod{8}$, \eqref{eq3} has at most two solutions and only if $p \equiv 1$ or $7 \pmod{8}$, and \eqref{eq4} has at most one solution and only if $p \equiv 1 \pmod{8}$. \qed \\

\noindent {\it Proof of Theorem \ref{th5}.} If $A$ is even and $p$ is odd, we let $A=2A'$. Then 
\begin{equation*}
y^{2}=2px(A'x^{2}+1).
\end{equation*}%
We let $y=2pw$, and we obtain 
\begin{equation*}
2pw^{2}=x(A'x^{2}+1).
\end{equation*}%
Since $\gcd (x,A'x^{2}+1)=1,$ there exist positive integers $u$ and 
$v$ such that either \newline
$x=2pu^{2}$, $A'x^{2}+1=v^{2}$, and 
\begin{equation}
v^{2}-4A'p^{2}u^{4}=1,  \label{eq6}
\end{equation}%
or $x=2u^{2},$ $A'x^{2}+1=pv^{2}$ and 
\begin{equation}
pv^{2}-4A'u^{4}=1,  \label{eq5}
\end{equation}%
or $x=u^{2},$ $A'x^{2}+1=2pv^{2}$ and 
\begin{equation}
2pv^{2}-A'u^{4}=1,  \label{eq7}
\end{equation}%
or $x=pu^{2},$ $A'x^{2}+1=2v^{2}$ and 
\begin{equation}
2v^{2}-A'p^{2}u^{4}=1.  \label{eq8}
\end{equation}
If $A'$ is a perfect square, then equation \eqref{eq6} has no positive integer solution, otherwise it has at most one positive
integer solution by Lemma \ref{lem7}.\newline
By Theorem \ref{th3}, each of equations \eqref{eq5}, \eqref{eq7}, and \eqref{eq8} has at most one solution. Equation \eqref{eq5} has a solution only if $p\equiv 1\pmod{4}$ and $\left( \frac{-A'}{p}\right) =1$, equation \eqref{eq7} has a solution only if $A'$ is odd and $\left( \frac{-A'}{p}\right) =1,$ and equation \eqref{eq8} has a solution only if $A'$ is odd.
Since the number of solutions to equations \eqref{eq5} and \eqref{eq7} depends on the value of $\left( \frac{-A'}{p}\right) =\left( \frac{
-2A}{p}\right) $, we first suppose that $p\mid A$ or $\left( \frac{-2A}{p}\right)=-1$.  Then equations \eqref{eq5} and \eqref{eq7} have no integer solution.\\

\noindent If $A\equiv 0\pmod{4}$, then equation \eqref{eq6} has at most one solution, \eqref{eq5} has no solution, \eqref{eq7} 
has no solution, and \eqref{eq8} has no solution.\newline

\noindent If $A\equiv 2\pmod{4}$, then equation \eqref{eq6} has at most one solution, \eqref{eq5} has no solution, \eqref{eq7} has no solution, and \eqref{eq8} has at most one solution.\\

We now suppose that $\left( \frac{-2A}{p}\right) =1$. Then equations \eqref{eq5} and \eqref{eq7} have at most one positive integer solution. 

\noindent If $A\equiv 0\pmod{4}$, then equation \eqref{eq6} has at most one solution, \eqref{eq5} has at most one solution only if $p\equiv 1\pmod{4}$, \eqref{eq7} has no solution, and \eqref{eq8} has no solution.\\

\noindent If $A\equiv 2\pmod{4}$, then equation \eqref{eq6} has at most one solution, \eqref{eq5} has at most one solution only if $p\equiv 1\pmod{4}$, \eqref{eq7} has at most one solution, and \eqref{eq8} has at most one solution.\\

If $A$ is even and $p=2$, we let $A=2A'$. Then 
\begin{equation*}
y^{2}=2x(2A'x^{2}+2).
\end{equation*}%
We let $y=2w$, and we obtain 
\begin{equation*}
w^{2}=x(A'x^{2}+1).
\end{equation*}%
Since $\gcd (x,A'x^{2}+1)=1$, there exist positive integers $u$ and $v$ such that $x=u^{2}$, $A'x^{2}+1=v^{2}$, and 
\begin{equation}
v^{2}-A'u^{4}=1,  \label{eq9}
\end{equation}%
which has no solution if $A'$ is a perfect square and at most two solutions by Theorem \ref{th4}. Moreover, if $A'$ is even and $%
A'\neq 2^{5}\cdot 1785,$ then by Lemma \ref{lem7} equation \eqref{eq9} has at most one solution. \qed

\begin{rem}
When we had finished writing the paper, we noticed that a proof of the result stated in Lemma \ref{lem7} already existed within the proof of Theorem 1 by Luca and Walsh in \cite{LuWa:2005}. Our proof of Lemma \ref{lem7} seems to be different than the proof of the result in \cite{LuWa:2005}.
\end{rem}

\begin{rem}
Theorem \ref{th1} implies that Conjecture \ref{To2} is true if $(A,p) \equiv (1,5)$, $(1,7)$, $(3,3)$, $(3,5)$, $(5,3)$, $(7,3)$, or $(7,5) \pmod{8}$.
\end{rem}


\end{document}